\tikzset{curve/.style={settings={#1},to path={(\tikztostart)
    .. controls ($(\tikztostart)!\pv{pos}!(\tikztotarget)!\pv{height}!270:(\tikztotarget)$)
    and ($(\tikztostart)!1-\pv{pos}!(\tikztotarget)!\pv{height}!270:(\tikztotarget)$)
    .. (\tikztotarget)\tikztonodes}},
    settings/.code={\tikzset{quiver/.cd,#1}
        \def\pv##1{\pgfkeysvalueof{/tikz/quiver/##1}}},
    quiver/.cd,pos/.initial=0.35,height/.initial=0}
\tikzset{tail reversed/.code={\pgfsetarrowsstart{tikzcd to}}}
\tikzset{2tail/.code={\pgfsetarrowsstart{Implies[reversed]}}}
\tikzset{2tail reversed/.code={\pgfsetarrowsstart{Implies}}}
\tikzset{no body/.style={/tikz/dash pattern=on 0 off 1mm}}
\DeclareMathOperator{\menos}{\setminus}
\newcommand{\fecho}{\overline}
\newtheorem{prop}{Proposition}[section]
\newtheorem{cor}[prop]{Corollary}
\newtheorem{prob}[prop]{Problem}
\newtheorem{remark}[prop]{Remark}
\newtheorem{example}[prop]{Example}
\begin{document}
\newenvironment{sketch}{\paragraph{Sketch:}}{\hfill$\square$}

\title{Selective game version of q-points} 

\author[D. A. Lara]{Dione A. Lara}
\address{Instituto de Ci\^encia Tecnologia e Inova\c{c}\~ao,
Universidade Federal de Lavras, Campus Para\'iso,
S\~ao Sebasti\~ao do Para\'iso, MG, 37950-000, Brazil}
\email{dione.lara@ufla.br}

\author[R. M. Mezabarba]{Renan M. Mezabarba}
\address{Centro de Ci\^encias Exatas,
Universidade Federal do Esp\'irito Santo,
Vit\'oria, ES, 29075-910, Brazil}
\email{renan.mezabarba@ufes.br}

\keywords{topological games, selection principles, $q$-points, local properties}

\subjclass[2020]{54A20, 91A44, 54E18.}
\maketitle

\begin{abstract}
This work presents the selection principle $S_1^*(\tau_x,CD)$ that characterizes $q$-points. We also discuss the induced topological game $G_1^*(\tau_x,CD)$ and its relations with $W$-points and $\widetilde{W}$-points, as well as with the game $G_1(\Omega_x,\Omega_x)$.
\end{abstract}

\section*{Introduction}

First countable spaces and their many generalizations have a long history in General Topology. Among the numerous properties introduced to generalize these spaces, some of them can be viewed as selective properties concerning limit points or convergent sequences. In this work we shall analyze some of these properties in the framework of selection principles, as defined by Scheepers~\cite{scheepers.1}.

According to Michael~\cite{michael}, a point $x$ of a topological space $X$ is a \textbf{$q$-point} if it has a sequence of neighborhoods $(V_n)_{n\in\omega}$ such that, if $(x_n)_{n\in\omega}$ is an injective sequence of points with $x_n\in V_n$ for all $n$, then the set $\{x_n:n\in\omega\}$ has an accumulation point in $X$. The space is called a \textbf{$q$-space} if every point is a $q$-point.

The definition of $W$ and $w$-points, due to Gruenhage~\cite{gruenhage}, depends on a two-person infinite game, played as follows: for a fixed point $x$ of a topological space $X$, Player I chooses an open set $U_0$ with $x\in U_0$, which Player II answers with a point $x_0\in U_0$; Player I then picks an open set $U_1$ containing the point $x$, and Player II chooses some point $x_1\in U_1$, and so on; Player I wins a play of this game if the sequence of points selected by Player II converges to $x$. The point $x$ is a \textbf{$W$-point} if Player I has a winning strategy in this game, and it is called a $w$-point if Player II does not have a winning strategy. We say $X$ is a \textbf{$W$-space} if every point of $X$ is a $W$-point, with a similar terminology for \textbf{$w$-spaces}.

Finally, in the more recent work of Dolen\v{z}al and Moors~\cite{dolenzal}, $W$-spaces are generalized with an adaptation of the previous game: first, instead of choosing open sets that contain the point $x$, Player I has to select simply nonempty open sets while Player II follows the previous rules; the second difference is the winning condition for Player I, which in this game just asks for the sequence of points selected by Player II to have $x$ as an accumulation point\footnote{This winning condition is not effectively different, since Theorem 3.9 in~\cite{gruenhage} shows that the winning condition in the definition of $W$-points can be replaced by the one used in the definition of $\widetilde{W}$-points.}. Then, the point $x$ is said to be a \textbf{$\widetilde{W}$-point} if Player I has a winning strategy in this game, and the space $X$ is a \textbf{$\widetilde{W}$-space} if every point is a $\widetilde{W}$-point.

At this \emph{point}, the reader which is familiar with selection principles and their game versions, as defined by Scheepers~\cite{scheepers.1}, may have noticed that each one of the above properties are appropriate variations of $S_1(\mathcal{A,B})$ principles and $G_1(\mathcal{A,B})$ games.

We shall denote by $S_1(\mathcal{A,B})$ the assertion that for every sequence $(A_n)_{n\in\omega}$ of members of $\mathcal{A}$ there exists a set $\{b_n:n\in\omega\}\in\mathcal{B}$ where $b_n\in A_n$ for all $n$. Tied to that we have the game $G_1(\mathcal{A,B})$, which 
consists of a two-person infinite game, played as follows: at the first inning Player I begins by choosing an element $A_0\in\mathcal{A}$ and Player II answers with an element $b_0\in A_0$; at the next inning Player I chooses an element $A_1\in\mathcal{A}$ and Player II answers with an element $b_1\in A_1$, and so on; Player I wins a play of this game if the set $\{b_n:n\in\omega\}$ belongs to $\mathcal{B}$.

A \textbf{strategy} for a player is a function, let us say $\sigma$, determining how that player shall answer her opponent based on all their previous choices. We say that a strategy $\sigma$ is a \textbf{winning strategy} for its player if there is no way to her opponent to defeat it in any legal play according to $\sigma$. We write I $\uparrow G_1(\mathcal{A},\mathcal{B})$ in order to abbreviate the assertion ``Player I has a winning strategy'', whose negation we denote by I $\not\,\uparrow G_1(\mathcal{A},\mathcal{B})$. Similar notations are adopted regarding Player II. In general we always have that $\neg S_1(\mathcal{A,B})$ implies I $\uparrow G_1(\mathcal{A},\mathcal{B})$, which in turns implies II $\not\,\uparrow G_1(\mathcal{A,B})$.


This work is organized as follows. In the first section, we analyze the $q$-spaces, $W$-spaces, $w$-spaces and $\widetilde{W}$-spaces under the framework of selection principles, putting all these properties in the same picture. Sections 2 and 3 deal with the variations of $q$-points and $\widetilde{W}$-points, respectively, while in the last section we discuss related problems and possible directions of further investigation.






\section{The big picture}

Along this work, for an infinite T$_1$ space $(X,\tau)$ and a non-isolated point $x$, let us denote by
\begin{itemize}
    \item $\tau_x$ the family of all open sets of $X$ containing $x$,
    \item $\Omega_x=\left\{A\subseteq X:x\in\overline{A}\setminus A\right\}$ and
    \item $\Gamma_x=\{A\in\Omega_x:\forall V\in\tau_x\,|A\setminus V|<\aleph_0\}$.
\end{itemize}

One can easily see that a sequence $(x_n)_{n\in\omega}$ in $X$ converges to $x$ if and only if its image $\{x_n:n\in\omega\}$ belongs to $\Gamma_x$. Putting $\neg\Gamma_x=\{A\subseteq X:A\not\in \Gamma_x\}$, it follows that
\begin{itemize}
    \item $x$ is a $W$-point if and only if Player I has a winning strategy in the game $G_1(\tau_x,\neg\Gamma_x)$, and
    \item $x$ is a $w$-point if and only if Player II does not have a winning strategy in the game $G_1(\tau_x,\neg\Gamma_x)$.
\end{itemize}

The first variation is obtained with (the negation of) the $S_1$-version of the above principle: $\neg S_1(\tau_x,\neg\Gamma_x)$, which automatically implies the $W$-property. As it happens with the original $W$-property, first countability also implies $\neg S_1(\tau_x,\neg\Gamma_x)$. And it is not difficult to show that $\neg S_1(\tau_x,\neg\Gamma_x)$ also implies first countability.



We now turn to the Dolen\v{z}al and Moors'~\cite{dolenzal} variation of $W$-points. Since the definition of $\widetilde{W}$-points concerns accumulation points of a sequence rather than a set, we need to consider a slight variation of the selection principles presented in Introduction: we shall denote by $(S_1)(\mathcal{A,B})$ the assertion that for every sequence $(A_n)_{n\in\omega}$ of members of $\mathcal{A}$ there exists a sequence $(b_n)_{n\in\omega}\in\mathcal{B}$ where $b_n\in A_n$ for all $n$. Thus, the only difference between $S_1(\mathcal{A,B})$ and $(S_1)(\mathcal{A,B})$ is that in the first we ask for $\{b_n:n\in\omega\}\in\mathcal{B}$, while in the second we impose $(b_n)_{n\in\omega}\in \mathcal{B}$. A similar variation is understood for the game $(G_1)(\mathcal{A,B})$.

With this in mind, let $L_x$ be the collection of all sequences accumulating at the point $x$. Now, by taking $\neg L_x$ to be the set of all other sequences, it follows that $x$ is a $\widetilde{W}$-point if and only if Player I does have a winning strategy in the game $(G_1)(\tau^*,\neg L_x)$, where we set $\tau^*=\tau\setminus\{\emptyset\}$. This give us two variations of $\widetilde{W}$-points, namely
\begin{itemize}
    \item those points $x$ satisfying $\neg (S_1)(\tau^*,\neg L_x)$ and
    \item those points $x$ such that Player II does not have a winning strategy at the game $(G_1)(\tau^*,\neg L_x)$.
\end{itemize}

Since $\tau_x\subseteq\tau^*$ and every sequence converging to $x$ belongs to $L_x$, it follows that
\begin{align}
 \label{s1(s1)}\neg S_1(\tau_x,\neg\Gamma_x)\Rightarrow&\, \neg (S_1)(\tau^*,\neg L_x)\\
    \label{W.thenWtilde}\text{I}\uparrow G_1(\tau_x,\neg\Gamma_x)\Rightarrow &\, \text{I}\uparrow (G_1)(\tau^*,\neg L_x)
\end{align}
where \eqref{W.thenWtilde} reads as ``every $W$-point is a $\widetilde{W}$-point'', what is already pointed out in \cite{dolenzal}, and  \eqref{s1(s1)} imply $(S_1)(\tau^*,\neg L_x)$ does not hold for points with countable local bases. Regarding local bases, Theorem 3.3 in \cite{gruenhage} states that the image of a winning strategy for Player I in the game $G_1(\tau_x,\neg\Gamma_x)$ provides a local base at $x$. Then it is natural to expect the following.

\begin{prop}
Let $X$ be a topological space. If $\sigma$ is a winning strategy for Player I in the game $(G_1)(\tau^*,\neg L_x)$, then the image of $\sigma$ is a $\pi$-base at the point $x$.\end{prop}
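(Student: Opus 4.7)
The plan is to argue by contradiction, following the template of Theorem~3.3 of \cite{gruenhage}. Suppose the image of $\sigma$ fails to be a $\pi$-base at $x$. Since $\sigma$ takes values in $\tau^*$, its image is automatically a family of nonempty open sets, so the failure of the $\pi$-base property yields some neighborhood $U\in\tau_x$ such that $\sigma(s)\not\subseteq U$ for every input $s$ of $\sigma$ arising from a legal partial play; equivalently, $\sigma(s)\setminus U\neq\emptyset$ for every such $s$.

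Next, I would inductively build a counter-play for Player~II using this $U$. At the $n$-th inning, after Player~I responds with $\sigma(b_0,\ldots,b_{n-1})$, let Player~II choose any point $b_n\in\sigma(b_0,\ldots,b_{n-1})\setminus U$; the previous step guarantees that this difference is nonempty, and the move is legal because $b_n$ lies in $\sigma(b_0,\ldots,b_{n-1})$. Iterating produces a $\sigma$-legal infinite play whose sequence $(b_n)_{n\in\omega}$ lies entirely outside $U$.

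To conclude, observe that $U$ is a neighborhood of $x$ containing no term of $(b_n)_{n\in\omega}$, so $x$ cannot be an accumulation point of this sequence; equivalently, $(b_n)_{n\in\omega}\in\neg L_x$, so Player~II wins the play, contradicting the assumption that $\sigma$ is a winning strategy for Player~I. I do not foresee any genuine obstacle; the argument is a direct adaptation of Gruenhage's reasoning, and the only subtle point is that, because Player~I's moves are drawn from $\tau^*$ rather than $\tau_x$, the conclusion weakens from ``local base at $x$'' to ``$\pi$-base at $x$''---the sets $\sigma(s)$ need not contain $x$, and only the ``every neighborhood of $x$ contains one of them'' half of the definition survives.
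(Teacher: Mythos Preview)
Your proof is correct and follows essentially the same route as the paper's: assume the image of $\sigma$ is not a $\pi$-base at $x$, extract a neighborhood $U$ of $x$ that no $\sigma$-move is contained in, and have Player~II always pick outside $U$ to produce a $\sigma$-compatible play whose sequence misses $U$ entirely and hence fails to accumulate at $x$. Your write-up is slightly more explicit about the inductive construction of the counter-play and about restricting attention to inputs arising from legal partial plays, but the argument is the same as the paper's.
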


\begin{proof} The argument is similar to the proof of Theorem 3.3 in \cite{gruenhage}. Let $\operatorname{Im}(\sigma)$ be the image of $\sigma$ and suppose that it is not a $\pi$-base for the point $x$. If this is the case, then there exists an open set $U\in\tau^*$ containing $x$ such that every $V\in\operatorname{Im}(\sigma)$ satisfies $V\nsubseteq U$, which gives a way for Player II to win a play. Indeed, it is sufficient that Player II to choose points not belonging to $U$. If $(y_n)_{n\in\omega}$ is the sequence of the choices made by Player II in such a way, then $x$ is not an accumulation point of $(y_n)_{n\in\omega}$, since $y_n\not\in U$ for every $n\in\omega$.
\end{proof} 

With a similar reasoning, one can shows that the condition $\neg (S_1)(\tau^*,\neg L_x)$ is equivalent to the existence of a countable open $\pi$-network at the point $x$. 

\begin{remark} Theorem 3.3 in \cite{gruenhage} also shows that if $X$ is a $W$-space, then
\[\chi(X)=\min\{\operatorname{Im}(\sigma):\sigma\text{ is a winning strategy for Player I in }G_1(\tau_x,\neg\Gamma_x)\}\leq |X|,\]
where $\chi(X)$ denotes the character of $X$. This is also the case for $\widetilde{W}$-spaces if one replaces $\chi$ with $\pi\chi$, the $\pi$-character of $X$. The proof, which is essentially the same as the one presented in \cite{gruenhage}, is left to the reader.\end{remark}

Since $w$-points are those for which Player II does not have a winning strategy in the game $G_1(\tau_x,\neg\Gamma_x)$, one could call by \textbf{$\widetilde{w}$-points} those points satisfying II $\not\,\uparrow (G_1)(\tau^*,\neg L_x)$. However, since a sequence does not converging to $x$ may still have $x$ as an accumulation point, it is not clear what is the relation between $w$-points and $\widetilde{w}$-points.

\begin{prob}
Does every $w$ point is a $\widetilde{w}$-point? Does the converse hold?
\end{prob}

Still regarding $w$ and $\widetilde{w}$-points, Gruenhage \cite{gruenhage} implicitly shows that every $w$-space is \textbf{strictly Fr\'{e}chet} \cite{Gerlits1982}, meaning that $S_1(\Omega_x,\Gamma_x)$ holds for every point $x$ in the space. Indeed, a sequence $(F_n)_{n\in\omega}$ of subsets\footnote{Gruenhage considers a decreasing sequence of subsets containing $x$ and their closure, which corresponds to the countably bi-sequential condition, also known as \textbf{strongly Fr\'{e}chet}\cite{siwiec}.} such that $x\in\overline{F_n}$ for every $n$ induces a strategy for Player II in the game $G_1(\tau_x,\neg \Gamma_x)$ by choosing a point in $A_n\cap F_n$ for every sequence $(A_0,\dotso,A_n)$ of neighborhoods of $x$. Since Player II does not have a winning strategy, there exists a play according to this strategy in which Player II loses, thus giving a sequence $(x_n)_{n\in\omega}$ such that $x_n\to x$ and $x_n\in F_n$ for every $n$. However, the same argument does not hold for $\widetilde{w}$-spaces, motivating the following.

\begin{prob} Does every $\widetilde{w}$-space is (strictly) Fr\'{e}chet?\end{prob}


Finally, let us deal with $q$-points. Once again, since the original definition of Michael~\cite{michael} asks for a different type of selection, we need to consider a slight variation of the selection principle $S_1(\mathcal{A,B})$. We shall denote by $S_1^*(\mathcal{A,B})$ the assertion that for every sequence $(A_n)_{n\in\omega}$ of members of $\mathcal{A}$ there exists an injective sequence $(b_n)_{n\in\omega}$ where $b_n\in A_n$ for all $n$ such that $\{b_n:n\in\omega\}\in\mathcal{B}$. For the game case, $G_1^*(\mathcal{A,B})$ is exactly like $G_1(\mathcal{A,B})$, except for the additional rule asking for Player II to select different elements at each inning of a play.

Then, a point $x\in X$ is a $q$-point if and only if $S_1^*(\tau_x,CD)$ does not hold, where $CD=\{A\subseteq X:A$ is closed and discrete$\}$. The game variations then gives two generalizations of $q$-points
\begin{itemize}
    \item those points $x$ such that I~$\uparrow G_1^*(\tau_x,CD)$ and
    \item those points $x$ such that II~$\not\,\uparrow G_1^*(\tau_x,CD)$.
\end{itemize}

The next diagram summarizes the logical implications between all the properties considered so far, where the original ones are stated with their classic names.\small
\begin{figure}[h!]
\centering\begin{tikzcd}
	{x~\text{has a countable local basis}} \\
	\\
	{\neg\textsf{S}_1(\tau_x,\neg\Gamma_x)} && {\neg(\textsf{S}_1)(\tau^*,\neg L_x)} && {x~\text{is a }q\text{-point}} \\
	{x~\text{is a }W\text{-point}} && {x~\text{is a }\widetilde{W}\text{-point}} && {\textsf{I}\uparrow \textsf{G}_1^*(\tau_x,CD)} \\
	{x~\text{is a }w\text{-point}} && {\textsf{II}\not\,\uparrow(\textsf{G}_1)(\tau^*,\neg L_x)} && {\textsf{II}\not\,\uparrow\textsf{G}_1^*(\tau_x,CD)}
	\arrow[Rightarrow, from=3-1, to=4-1]
	\arrow[Rightarrow, from=3-3, to=4-3]
	\arrow[Rightarrow, from=3-5, to=4-5]
	\arrow[Rightarrow, from=4-3, to=5-3]
	\arrow[Rightarrow, from=4-1, to=5-1]
	\arrow["\eqref{W.thenWtilde}", Rightarrow, from=4-1, to=4-3]
	\arrow["\eqref{s1(s1)}", shorten <=8pt, Rightarrow, from=3-1, to=3-3]
	\arrow[Leftrightarrow, from=1-1, to=3-1]
	\arrow[Rightarrow, from=4-5, to=5-5]
	\arrow["\text{\cite{michael}}", shift left=3, shorten <=35pt, Rightarrow, from=1-1, to=3-5]
\end{tikzcd}
\caption{The (incomplete) big picture.}
\end{figure}
\normalsize

In the next section we shall see how another weakening of first countability, namely the G$_\delta$-point condition, can be used to relate $q$ points with $\widetilde{W}$-points and their selective variations.

\section{The influence of G$_\delta$-points}

Note that by definition, the open sets played by Player I in the game $G_1^*(\tau_x,CD)$ satisfy the conditions of the game $(G_1)(\tau^*,\neg L_x)$. However, the winning condition for Player I in the former game requires that the points chosen by Player II accumulates at \emph{some point}, while the winning conditions for Player I in the later game asks for the point $x$ to be such an accumulation point. We shall see that the G$_\delta$ condition can be used to force this to happen.

\begin{prop}\label{na:acum}
Let $X$ be a regular space such that $x\in X$ is a $G_{\delta}$-point. If I $\uparrow G_1^*(\tau_x,CD)$, then I $\uparrow (G_1)(\tau^*,\neg L_x)$, i.e., $x$ is a $\widetilde{W}$-point.
\end{prop}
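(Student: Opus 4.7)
The plan is to promote a winning strategy $\sigma$ for Player I in $G_1^*(\tau_x,CD)$ to one for Player I in $(G_1)(\tau^*,\neg L_x)$ by intersecting every $\sigma$-output with a shrinking neighborhood of $x$. The $G_\delta$ and regularity hypotheses will force any accumulation point produced by $\sigma$ to be $x$ itself. I would begin by fixing a decreasing sequence $(G_n)_{n\in\omega}$ of open neighborhoods of $x$ satisfying $\overline{G_{n+1}}\subseteq G_n$ and $\bigcap_{n\in\omega}G_n=\{x\}$ (and hence also $\bigcap_{n\in\omega}\overline{G_n}=\{x\}$); such a sequence is built by the standard recursion using $G_\delta$ and regularity.

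I would then define the new strategy $\tau$ as follows. Given a history $(y_0,\dots,y_{n-1})$ of Player II's responses in $(G_1)(\tau^*,\neg L_x)$, let $(z_0,\dots,z_{m-1})$ be the subsequence of distinct non-$x$ values in that history, listed in order of first appearance, and set $\tau(y_0,\dots,y_{n-1})=\sigma(z_0,\dots,z_{m-1})\cap G_n$. This is a nonempty open set containing $x$, hence in $\tau^*$. An induction on $n$ shows that $(z_0,\dots,z_{m-1})$ is always a legal play for Player II in $G_1^*$ against $\sigma$: when $z_i$ first appears, as some $y_n$, the current $\sigma$-history has length $i$, so $z_i=y_n\in\sigma(z_0,\dots,z_{i-1})$, and the $z_i$ are distinct by construction.

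For the verification, I would analyze any play $(y_n)_{n\in\omega}$ consistent with $\tau$. If infinitely many $y_n$ equal $x$, the sequence accumulates at $x$ and we are done. Otherwise, only finitely many $y_n$ are equal to $x$. Should $(y_n)$ take only finitely many distinct values, pigeonhole provides some $y^*\neq x$ with $y^*=y_{n_k}$ along $n_k\to\infty$; since $y^*\in G_{n_k}$ and the $G_n$ are decreasing, $y^*\in\bigcap_{n\in\omega}G_n=\{x\}$, contradicting $y^*\neq x$. In the remaining case $(y_n)$ takes infinitely many values, so the induced play $(z_i)_{i\in\omega}$ against $\sigma$ is infinite, and $\sigma$ being winning yields $\{z_i:i\in\omega\}\notin CD$, giving a set-accumulation point $p\in X$. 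In the T$_1$ setting, $p$ is automatically a sequence-accumulation point of the injective $(z_i)$, hence of the supersequence $(y_n)$. Because $y_n\in G_n$ and $\bigcap_{n\in\omega}\overline{G_n}=\{x\}$, if $p\neq x$ we could choose $k$ with $p\notin\overline{G_k}$, making $X\setminus\overline{G_k}$ a neighborhood of $p$ missing $y_n$ for all $n\geq k$, a contradiction; thus $p=x$.

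The main obstacle is that Player II in $(G_1)(\tau^*,\neg L_x)$ is free to repeat points and to play $x$, so the new-game moves cannot be handed to $\sigma$ directly. The subsequence gadget resolves this by splitting the move-stream into a part that trivially helps Player I (repetitions of $x$, or a finite non-$x$ range that the nesting rules out) and a genuine injective subplay compatible with $\sigma$; the $G_\delta$-nesting then forces the accumulation produced by $\sigma$ to land at $x$.
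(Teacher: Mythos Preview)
Your proof is correct and shares the paper's architecture: intersect the outputs of $\sigma$ with a shrinking $G_\delta$ family of neighborhoods of $x$, use regularity to nest closures, and conclude that any accumulation point of Player~II's choices must be $x$. The one genuine difference is how the injectivity constraint of $G_1^*$ is met. The paper applies regularity \emph{dynamically}: after Player~II picks $x_{n-1}$, it separates $x$ from $x_{n-1}$ by an open set $A_{n-1}$ with $x_{n-1}\notin\overline{A_{n-1}}$ and then plays inside $A_{n-1}$, so Player~II's responses are automatically pairwise distinct and feed directly into $\sigma$. You instead fix the nested $(G_n)$ once and for all and handle possible repetitions (and the choice $y_n=x$) by extracting the subsequence $(z_i)$ of distinct non-$x$ values; the degenerate cases are dispatched separately. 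Your bookkeeping is slightly heavier, but it buys a cleaner treatment of the edge case where Player~II selects $x$ itself, which the paper's construction does not explicitly address (one cannot take $A_{n-1}$ with $x\in A_{n-1}$ and $x_{n-1}=x\notin\overline{A_{n-1}}$).
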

\begin{proof}

Let $\sigma$ be a winning strategy for Player I in $G_1^*(\tau_x,CD)$ and let $\{U_n\}_{n\in\omega}$ be a countable family of open sets such that $\bigcap_{n\in\omega}U_n=\{x\}$. We shall obtain a winning strategy for Player I in the game $(G_1)(\tau^*,\neg L_x)$ in the following way.

Player I starts with $V_0=\sigma(\emptyset)\cap U_0$, to which Player II responds by picking a point $x_0\in V_0$. Since $X$ is regular, there is an open set $A_0\in\tau_x$ such that $x\in A_0\subset\fecho{A_0}\subset V_0$ with $x_0\notin\fecho{A_0}$. At the next inning, Player I chooses $V_1=\sigma(x_0)\cap A_0\cap U_1$, to which Player II replies with a point $x_1\in V_1$, and again we may take an open set $A_1\in\tau_x$ with $x\in A_1\subset\fecho{A_1}\subset V_1$ and $x_1\notin\fecho{A_1}$. 

At the $n$-th inning, the regularity of $X$ allows us to take an open set $A_{n-1}$ such that $x\in A_{n-1}\subset\fecho{A_{n-1}}\subset V_{n-1}$ with $x_{n-1}\notin\fecho{A_{n-1}}$. So, following the previous pattern, Player I chooses $V_n=\sigma(x_0,...,x_{n-1})\cap A_{n-1}\cap U_n$. This describe a strategy for Player I in the game $G_1(\tau^*,\neg L_x)$.

Now, let $P=(V_0,x_0,V_1,x_1,\dotso)$ be a play in this game accordingly to the strategy described above. Notice that:
\begin{itemize}
\item $\bigcap_{n\in\omega}V_n=\{x\}$ and for every $n\in\omega$ we have $\fecho{V_{n+1}}\subset V_n$, since $V_{n+1}=\sigma(x_{n+1})\cap A_n\cap U_{n+1}\subset\fecho{A_{n+1}}\subset V_n$;
\item for every $n\in\omega$ we have $x_n\in V_n\menos\fecho{A_n}$ with $x_k\notin V_n\menos\fecho{A_n}$ for all $k>n$; since $X$ is a T$_1$ space, it follows that $x_n$ is not an accumulation point of the sequence.
\end{itemize}

By the way Player I chooses their open sets, the points selected by Player II are pairwise distinct, then corresponding to legal choices in the game $G_1^*(\tau_x,CD)$. Since $\sigma$ is a winning strategy for Player I in the later game, it follows that $(x_n)_{n\in\omega}$ accumulates in $X$, and the remarks above guarantee that the point has to be $x$. Indeed, for $y\in X$ such that $y\not\in\{x_n:n\in\omega\}\cup\{x\}$, there exists $k\in\omega$ such that $y\notin\fecho{V_k}$, implying that $X\setminus\overline{V_k}$ is an open set containing $y$ but only finitely many points of the sequence. Thus the sequence $(x_n)_{n\in\omega}$ can only accumulate at $x$.
\end{proof}

\begin{cor} Every G$_\delta$ $q$-point of a regular space is also a $\widetilde{W}$-point.\end{cor}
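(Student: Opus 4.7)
The plan is to chain together two facts: the characterization of $q$-points as witnesses of $\neg S_1^*(\tau_x,CD)$ established in Section~1, and Proposition~\ref{na:acum} just proved. First I would unfold the definition: $x$ being a $q$-point means there exists a sequence $(V_n)_{n\in\omega}$ of neighborhoods of $x$ such that every injective selection $(b_n)$ with $b_n\in V_n$ for all $n$ produces a set $\{b_n:n\in\omega\}\notin CD$ (equivalently, a set with an accumulation point in $X$, using the blanket T$_1$ hypothesis to identify ``closed discrete'' with ``no accumulation point'').

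Next I would convert this into a winning strategy for Player~I in $G_1^*(\tau_x,CD)$ by invoking the standard passage from $\neg S_1$ to I~$\uparrow G_1$ recalled in the Introduction. Concretely, Player~I plays the fixed sequence $V_0,V_1,\dotsc$ inning after inning, ignoring Player~II's responses entirely. Every legal reply by Player~II is an injective selection from $(V_n)$, so by the witness property $\{b_n:n\in\omega\}\notin CD$, and Player~I wins. Hence I~$\uparrow G_1^*(\tau_x,CD)$.

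At this point the two hypotheses of the corollary---regularity of $X$ and the $G_\delta$ character of $x$---are exactly the input required by Proposition~\ref{na:acum}, which upgrades I~$\uparrow G_1^*(\tau_x,CD)$ to I~$\uparrow(G_1)(\tau^*,\neg L_x)$, i.e.\ to $x$ being a $\widetilde{W}$-point. There is essentially no obstacle here; the corollary is merely a packaging of Proposition~\ref{na:acum} together with the elementary observation that the failure of a selection principle always hands Player~I a predetermined (opponent-ignoring) winning strategy in the corresponding game.
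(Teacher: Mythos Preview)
Your proposal is correct and follows exactly the route the paper intends: the corollary is obtained by combining the implication $q$-point $\Rightarrow$ I~$\uparrow G_1^*(\tau_x,CD)$ (which you spell out via the predetermined strategy playing the witnessing sequence) with Proposition~\ref{na:acum}. The paper gives no separate argument beyond placing the statement immediately after that proposition, so your write-up is precisely the intended unpacking.
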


Another way to obtain the previous corollary is through the corresponding implication regarding the $S_1$-version of the previous proposition.

\begin{prop}\label{s1.onion}
Let $X$ be a regular space such that $x$ is a G$_\delta$-point. If $x$ is a $q$-point, then $(S_1)(\tau^*,\neg L_x)$ does not hold.
\end{prop}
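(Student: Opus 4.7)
The plan is to exhibit a single sequence $(U_n)_{n\in\omega}$ of nonempty open sets such that every choice $x_n\in U_n$ forces $x$ to be an accumulation point of $(x_n)_{n\in\omega}$. First I would set up two preliminary sequences. On the $q$-point side, pick a witness $(V_n)_{n\in\omega}$ of neighborhoods of $x$ and replace it by its initial intersections so that $V_{n+1}\subseteq V_n$; this is a harmless reduction since shrinking neighborhoods preserves the defining property of a $q$-point. On the $G_\delta$ side, start from open sets $W_n'\ni x$ with $\bigcap_n W_n'=\{x\}$ and apply regularity inductively to obtain a decreasing sequence of open sets $(W_n)_{n\in\omega}$ with $x\in W_n\subseteq \overline{W_n}\subseteq W_n'$, so that
\[
\bigcap_{n\in\omega}\overline{W_n}\subseteq\bigcap_{n\in\omega}W_n'=\{x\}.
\]

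Then I would set $U_n=V_n\cap W_n\in\tau_x\subseteq\tau^*$ and show that for any $x_n\in U_n$ the point $x$ accumulates $(x_n)_{n\in\omega}$. Split into two cases according to whether the range $\{x_n:n\in\omega\}$ is finite or infinite. In the finite case some value $y$ is hit by $x_n$ for infinitely many $n$; in particular $y\in W_n$ for infinitely many $n$, and since $(W_n)$ is decreasing this forces $y\in\bigcap_n W_n=\{x\}$, so $x$ occurs infinitely often in the sequence. In the infinite case, extract a strictly increasing sequence of indices $(n_k)_{k\in\omega}$ such that $(x_{n_k})_{k\in\omega}$ is injective; since $(V_n)$ is decreasing and $n_k\geq k$, one has $x_{n_k}\in V_{n_k}\subseteq V_k$, so the $q$-point property applied to this re-indexed injective sequence yields an accumulation point $z$ of the infinite set $\{x_{n_k}:k\in\omega\}$. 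Because every neighborhood of $z$ meets the set in infinitely many terms and $x_{n_k}\in W_{n_k}\subseteq W_k$ for $n_k\geq k$, it follows that $z\in\overline{W_k}$ for every $k$, whence $z\in\bigcap_k\overline{W_k}=\{x\}$. Thus $z=x$, and $x$ accumulates the original sequence.

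The main technical point is making sure that $\bigcap_n\overline{W_n}=\{x\}$, not merely $\bigcap_n W_n=\{x\}$: the accumulation point produced by the $q$-point property lies only in the closures of the $W_n$, so a naive $G_\delta$-presentation is not strong enough. This is precisely where regularity enters and parallels the use of regularity in Proposition~\ref{na:acum}. The rest is a straightforward case analysis on whether Player II's choices form a finite or infinite range, with the finite case handled by the $G_\delta$-property alone and the infinite case by the $q$-point property combined with the closure condition on the $W_n$.
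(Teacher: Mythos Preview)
Your proof is correct and follows the approach the paper sketches: combine a $q$-point witness with a $G_\delta$-witness, use regularity to pass to closure-nested open sets so that $\bigcap_n\overline{W_n}=\{x\}$, and conclude that any selection must accumulate at $x$. The paper leaves the details to the reader; your case split on whether the selected range is finite or infinite is exactly the missing ingredient needed to bridge from arbitrary selections (as in $(S_1)$) to the injective selections required by the $q$-point hypothesis, and your handling of both cases is clean and correct.
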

\begin{proof}
Let $\{U_n\}_{n\in\omega}$ be a countable family of open sets such that $\{x\}=\bigcap_{n\in\omega}U_n$ and let $(W_n)_{n\in\omega}$ be a sequence of open sets witnessing the $q$-point condition for $x$. Proceeding like in the previous proposition, one may use the regularity to obtain a sequence $(V_n)_{n\in\omega}$ of open sets witnessing the failure of $(S_1)(\tau^*,\neg L_x)$. The details are left to reader.
\end{proof}
\begin{example}\label{example.omega1.1} 
Without the G$_\delta$-point assumption, the previous proposition may fail in general. For instance, let $X$ be the space $\omega_1+1$ and consider the point $\omega_1\in X$.

Notice that the following defines a winning strategy for Player I in the game $G_1^*(\tau_{\omega_1},CD)$: Player I starts with $V_0=X$, to which Player II responds with some point $x_0$; at the next inning, Player I selects $V_1=\left.\left]x_0+1,\omega_1\right.\right]$ and so on\footnote{Since the rules of the game $G_1^*(\tau_x,CD)$ prevents Player II of choosing the point $\omega_1$ twice, we may suppose Player II do not pick this point.}; since the points picked by Player II are increasing, the set $\{x_n:n\in\omega\}$ accumulates at $\sup_{n\in\omega}x_n<\omega_1$.

On the other hand, Player I does not have a winning strategy in the game $(G_1)(\tau^*,\neg L_{\omega_1})$: as implicitly showed in the above paragraph, it is enough for Player II to always choose points different from $\omega_1$.
\end{example}

The pattern becomes complete once we show that with the G$_\delta$-point assumption, II $\not\,\uparrow G_1^*(\tau_x,CD)$ also implies II $\not\,\uparrow(G_1)(\tau^*,\neg L_x)$.

\begin{prop}\label{II.onion} Let $X$ be a regular space with a G$_\delta$-point $x$. If II $\not\,\uparrow G_1^*(\tau_x,CD)$, then II $\not\,\uparrow(G_1)(\tau^*,\neg L_x)$.\end{prop}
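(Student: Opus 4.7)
The plan is to argue by contraposition: assuming that Player II has a winning strategy $\tau$ in $(G_1)(\tau^*,\neg L_x)$, we shall build a winning strategy $\sigma$ for Player II in $G_1^*(\tau_x,CD)$. In the spirit of Propositions~\ref{na:acum} and~\ref{s1.onion}, the strategy $\sigma$ will translate each move of Player I in $G_1^*(\tau_x,CD)$ into a legal move of Player I in $(G_1)(\tau^*,\neg L_x)$, then return the point that $\tau$ prescribes.

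Concretely, we first fix a countable family $\{U_n\}_{n\in\omega}$ of open sets with $\bigcap_{n\in\omega}U_n=\{x\}$ and proceed inductively. At the $n$-th inning, assuming that $y_0,\ldots,y_{n-1}$ are already defined and pairwise distinct, we use regularity to pick an open set $W_n$ with $x\in W_n\subseteq\fecho{W_n}\subseteq V_n\cap U_n$, and set
\[
A_n=W_n\menos\bigl(\{x\}\cup\{y_0,\ldots,y_{n-1}\}\bigr).
\]
The T$_1$ axiom makes $A_n$ open and, since every neighborhood of a non-isolated point in a T$_1$ space is infinite, $A_n$ is nonempty; hence $A_n\in\tau^*$ and $x\notin A_n$. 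Treating $(A_0,\ldots,A_n)$ as a partial play of Player I in the auxiliary game, we set $y_n=\tau(A_0,\ldots,A_n)\in A_n\subseteq V_n$ and declare $\sigma(V_0,\ldots,V_n)=y_n$. By construction, $(y_n)_{n\in\omega}$ is injective, avoids $x$, and satisfies $y_n\in V_n$, so this is a legal play for Player II in $G_1^*(\tau_x,CD)$. Since $\tau$ is winning in the auxiliary game, $(y_n)$ accumulates at $x$; together with injectivity and $y_n\neq x$, this forces $x$ to be an accumulation point of the set $\{y_n:n\in\omega\}$, so this set is not closed, hence not in $CD$, and $\sigma$ defeats Player I.

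The main technical nuisance will be the simultaneous bookkeeping that keeps each auxiliary move $A_n$ in $\tau^*$: we must shrink $V_n$ (to match the construction used in Propositions~\ref{na:acum} and~\ref{s1.onion}), delete $x$ (so that $\tau$'s response avoids $x$, which also produces a nontrivial accumulation point \emph{outside} the set), and delete the finitely many previously chosen $y_k$'s (to enforce the injectivity demanded by $G_1^*$), all while keeping $A_n$ open and nonempty. This is absorbed by the combination of regularity, the T$_1$ axiom, and the non-isolation of $x$; everything else in the verification is essentially bookkeeping.
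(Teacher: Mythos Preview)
Your argument is correct and follows essentially the same translation idea as the paper: convert a strategy for Player~II in $(G_1)(\tau^*,\neg L_x)$ into one for Player~II in $G_1^*(\tau_x,CD)$ by shrinking each move of Player~I before feeding it to the given strategy. You phrase it as a clean contrapositive (winning strategy $\Rightarrow$ winning strategy), whereas the paper works directly (arbitrary strategy $\Rightarrow$ not winning), but the mechanism is the same.

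One remark worth making explicit: although you dutifully build the sets $W_n$ with $x\in W_n\subseteq\overline{W_n}\subseteq V_n\cap U_n$, your verification never uses either the closure inclusion or the $U_n$'s. All you actually need is that $A_n$ is open, nonempty, contained in $V_n$, and misses $\{x,y_0,\dots,y_{n-1}\}$; taking simply $A_n=V_n\setminus\{x,y_0,\dots,y_{n-1}\}$ already does this in any T$_1$ space with $x$ non-isolated. So your write-up in fact shows that, unlike Propositions~\ref{na:acum} and~\ref{s1.onion}, this implication does not require regularity or the G$_\delta$ assumption. You may want to either drop the unused machinery or add a sentence pointing this out.
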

\begin{proof}
Let $\mu$ be a strategy for Player II in the game $(G_1)(\tau^*,\neg L_x)$. Since $\mu$ knows how to answer to every nonempty open set of $X$, we can use it to define a strategy for Player II in the game $G_1^*(\tau_x,CD)$, where the hypothesis shall give a play in which Player II loses, meaning that the points selected along the innings accumulate at some point. The G$_\delta$ condition will guarantee that this point is $x$. Let $\{U_n\}_{n\in\omega}$ be a countable family of open sets such that $\{x\}=\bigcap_{n\in\omega}U_n$.

If Player I starts with $V_0\in\tau_x$, let Player II responds with $x_0=\mu(V_0\cap U_0)$. In the next inning, if Player I chooses an open set $V_1\in\tau_x$, the regularity of $X$ gives an open set $A_0\in\tau_x$ such that $\overline{A_0}\subseteq V_0$ and $x_0\not\in \overline{A_0}$, which we use to define $x_1$ as $\mu(V_0\cap U_0, V_1\cap A_0\cap U_1)$. Proceeding like this, we obtain a strategy for Player II in the game $G_1^*(\tau_x,CD)$. Similarly as in the previous propositions, a play in this game lost by Player II induces a sequence which accumulates at $x$, showing that $\mu$ is a not a winning strategy.
%
%
%
\end{proof}

The next diagram summarizes the results of the current section.
\begin{figure}[h!]
\centering
\begin{tikzcd}
	{\neg(\textsf{S}_1)(\tau^*,\neg L_x)} && {x~\text{is a }q\text{-point}} \\
	{x~\text{is a }\widetilde{W}\text{-point}} && {\textsf{I}\uparrow \textsf{G}_1^*(\tau_x,CD)} \\
	{\textsf{II}\not\,\uparrow(\textsf{G}_1)(\tau^*,\neg L_x)} && {\textsf{II}\not\,\uparrow\textsf{G}_1^*(\tau_x,CD)}
	\arrow[Rightarrow, from=1-1, to=2-1]
	\arrow[Rightarrow, from=1-3, to=2-3]
	\arrow[Rightarrow, from=2-1, to=3-1]
	\arrow[Rightarrow, from=2-3, to=3-3]
	\arrow[Rightarrow, from=1-3, to=1-1]
	\arrow[Rightarrow, from=2-3, to=2-1]
	\arrow[Rightarrow, from=3-3, to=3-1]
\end{tikzcd}
\caption{The horizontal implications, under the hypotheses that $X$ is regular with $x$ a G$_\delta$-point.}
\end{figure}

\section{Duality and countable strong fan tightness}

Recall that a topological space has \textbf{countable strong fan tightness} at a point $x\in X$~\cite{Sakai1988} if $S_1(\Omega_x,\Omega_x)$ holds. Since every point with a countable local basis has countable strong fan tightness, this property can be viewed as an intermediate between first countability and countable tightness.

Following the terminology of \cite{alster} we say that two games $G$ and $G'$ are dual if
\begin{itemize}
    \item Player I has a winning strategy in $G$ if and only if Player II has a winning strategy in $G'$ ; and
    \item Player II has a winning strategy in $G$ if and only if Player I has a winning strategy in $G'$ .
\end{itemize}

\begin{prop}\label{dual}
The games $G_1(\Omega_x,\bigcup_{p\in X}\Omega_p)$ and ${G_1^*(\tau_x,CD)}$ are dual.\end{prop}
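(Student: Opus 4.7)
I would prove the duality by establishing its four constituent implications. Two of them -- those going from a winning strategy of Player I in one game to one for Player II in the other -- admit direct strategy-stealing arguments. The remaining two -- going from Player II in one game to Player I in the other -- require inverting a strategy and carry the real content of the proof.

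\paragraph{Direct directions.} Given a winning strategy $\sigma$ for Player I in $G_1(\Omega_x,\bigcup_{p\in X}\Omega_p)$, Player II in $G_1^*(\tau_x,CD)$ can respond to any move $U_n \in \tau_x$ by selecting a fresh point $b_n \in U_n \cap \sigma(b_0,\dotsc,b_{n-1})$; such a choice exists (indeed, the intersection is infinite) because $x$ is non-isolated and lies in the closure of $\sigma(b_0,\dotsc,b_{n-1}) \in \Omega_x$, while $U_n$ is an open neighbourhood of $x$. The sequence $(b_n)$ is then a legal play of $G_1$ against $\sigma$, so $\{b_n\} \in \bigcup_p\Omega_p$, hence $\{b_n\}$ is not closed, hence $\{b_n\} \notin CD$, and Player II wins. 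A parallel argument converts any winning strategy for Player I in $G_1^*(\tau_x,CD)$ into one for Player II in $G_1(\Omega_x,\bigcup_p\Omega_p)$, using that a closed and discrete set has no accumulation point in $X$ at all.

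\paragraph{Harder directions.} Assume now that Player II has a winning strategy $\nu$ in $G_1^*(\tau_x,CD)$; I construct a winning strategy for Player I in $G_1(\Omega_x,\bigcup_p\Omega_p)$ by maintaining inductively a virtual $G_1^*$-play $(U_0,b_0,\dotsc,U_{n-1},b_{n-1})$ consistent with $\nu$ and defining Player I's $n$-th move as
\[A_n \;=\; \{\nu(U_0,\dotsc,U_{n-1},U) : U \in \tau_x\} \setminus \{x\}.\]
Membership $A_n \in \Omega_x$ is immediate: for every $W \in \tau_x$ the value $\nu(U_0,\dotsc,U_{n-1},W)$ belongs to $W \cap A_n$, forcing $x \in \overline{A_n}$. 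When Player II answers with some $b_n \in A_n$, fix a $U_n$ witnessing $\nu(U_0,\dotsc,U_n) = b_n$ and extend the virtual play. The dual claim -- that a winning strategy $\mu$ for Player II in $G_1(\Omega_x,\bigcup_p\Omega_p)$ yields one for Player I in $G_1^*(\tau_x,CD)$ -- is handled symmetrically: recursively select $A_k$ so that $\mu(A_0,\dotsc,A_k) = b_k$, and let Player I in $G_1^*$ play any open neighbourhood $U_n$ of $x$ contained in $S_n \cup \{x\}$, where
\[S_n \;=\; \{\mu(A_0,\dotsc,A_{n-1},A) : A \in \Omega_x\}.\]
The existence of such a $U_n$ follows by contradiction: if every open neighbourhood of $x$ met $X \setminus (S_n \cup \{x\})$, the complement $X \setminus (S_n \cup \{x\})$ would itself belong to $\Omega_x$, and $\mu$ applied to it would return a point simultaneously in $S_n$ and in its complement.

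\paragraph{Main obstacle.} The heart of the argument is that the two winning conditions $CD$ and $\bigcup_p\Omega_p$ are \emph{not} logical negations of one another: $\{b_n\} \notin CD$ only forces some accumulation point, which a priori may lie inside $\{b_n\}$, whereas $\{b_n\} \in \bigcup_p\Omega_p$ requires an external accumulation point. In the harder directions the hypothesis therefore gives only half of what is needed, and I would supply the rest by nesting the constructed $A_n$ and $U_n$ inside a shrinking sequence $(W_n)_{n\in\omega}$ of open neighbourhoods of $x$ arranged so that each already-chosen $b_k$ is separated from every subsequent $W_m$; this pushes all accumulation to $x$, which lies outside $\{b_n\}$ by construction. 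Orchestrating this shrinking together with the recursive definitions of $A_n$ and $S_n$ is the central technical step.
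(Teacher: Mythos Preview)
Your four-way decomposition and the two ``direct'' strategy-stealing translations are exactly what the paper does. The paper, however, does \emph{not} address the obstacle you isolate in your last paragraph: it silently identifies $\bigcup_{p\in X}\Omega_p$ with the complement of $CD$ (indeed, in the final diagram it writes ``$\neg CD$ abbreviates the set $\bigcup_{p\in X}\Omega_p$''), so in its parts (iii) and (iv) it simply asserts the desired conclusion once the translated play has been set up, without discussing the possibility that $\{b_n\}$ is closed but not discrete. On this point your proposal is more careful than the paper's own proof.

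Your shrinking-neighbourhood remedy is the right idea and does make each $b_k$ isolated in $\{b_n:n\in\omega\}$, so that any accumulation point must be external. Two remarks, though. First, to get an \emph{open} neighbourhood of $b_k$ missing the tail you need $b_k\notin\overline{W_{k+1}}$, and arranging that requires regularity --- a hypothesis the proposition does not state (the paper only assumes $X$ is $T_1$). Second, your phrase ``pushes all accumulation to $x$'' overshoots: without a $G_\delta$ assumption the accumulation point could be any element of $\bigcap_m W_m$, but for direction (iii) you only need it to lie \emph{outside} $\{b_n\}$, which your construction already guarantees, so no $G_\delta$ hypothesis is needed here. A small technical point: in defining $A_n=\{\nu(U_0,\dots,U_{n-1},U):U\in\tau_x\}\setminus\{x\}$ you should note that one may assume $\nu$ never returns $x$ (harmless, since a single occurrence of $x$ does not affect membership in $CD$), or else $A_n$ could fail to lie in $\Omega_x$.
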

\begin{proof}

Let us first analyze how a winning strategy for Player I in one of the games yields a winning strategy for Player II in the other game.\medskip

\noindent (i)$\quad$I $\uparrow G_1(\tau_x,CD)$\,$\Rightarrow$ II $\uparrow G_1(\Omega_x,\bigcup_{p\in X}\Omega_p)$.\medskip

Let $\sigma$ be a winning strategy for Player I in the game $G_1^*(\tau_x,CD)$. Since the choices of Player I in the game $G_1(\Omega_x,\bigcup_{p\in X}\Omega_p)$ intercept every open set in $\tau_x$, one can readily defines a winning strategy for Player II in this game by choosing points in the open sets selected by $\sigma$. The details are left to reader.\medskip

\noindent (ii)$\quad$I $\uparrow G_1(\Omega_x,\bigcup_{p\in X}\Omega_p)$ $\Rightarrow$ II $\uparrow G_1^*(\tau_x,CD)$.\medskip

Let $\rho$ be a winning strategy for Player I in the game $G_1(\Omega_x,\bigcup_{p\in X}\Omega_p)$. If $A_0\in \tau_x$ is the first move of Player I in the game $G_1^*(\tau_x,CD)$, then Player II may select a point $x_0$ belonging to $\rho(\emptyset)\cap A_0$, what can be done since $\rho(\emptyset)\in\Omega_x$. If Player I responds with $A_1\in\tau_x$, then again Player II may select $x_1\in\rho(x_0)\cap (A_1\setminus\{x_0\})$, and so on. Since the strategy $\rho$ is winning, it follows that by the end of a play $(A_0,x_0,A_1,x_1,\dotso)$ we have $\{x_n:n\in\omega\}\not\in\Omega_p$ for all $p\in X$, i.e., $\{x_n\}_{n\in\omega}\in CD$.\medskip


Now we shall see how winning strategies for Player II in one of the games give winning strategies for Player I in the other game.\medskip

\noindent (iii)$\quad$II $\uparrow G_1^*(\tau_x,CD)$ $\Rightarrow$ I $\uparrow G_1(\Omega_x,\bigcup_{p\in X}\Omega_p)$.\medskip

Let $\sigma$ be a winning strategy for the Player II in the game $G_1^*(\tau_x,CD)$. We first show that $\{\sigma(V):V\in\tau_x\}\in\Omega_x$. If this is not the case, then there is a $U\in\tau_x$ such that $U\cap\{\sigma(V):V\in\tau_x\}=\emptyset$, which is absurd since $\sigma(U)\in U$. 
A similar argument shows that $\{\sigma(V_0,...,V_n,V):V\in\tau_x\}\in\Omega_x$ for every $V_0,...,V_n\in\tau_x$.
Thus Player I may use the strategy $\sigma$ to choose subsets in $\Omega_x$ while keeping track of a valid play on game $G_1(\tau_x,NA)$: Player I starts with $A_0=\{\sigma(V):V\in\tau_x\}$, then responds to a Player II's choice, say $\sigma(V_0)$, with $A_1=\{\sigma(V_0,V):V\in\tau_x\}$ and so on. It is clear that Player I wins every play of $G_1(\Omega_x,\bigcup_{p\in X}\Omega_p)$ with this strategy.\medskip 

\noindent (iv)$\quad$II $\uparrow G_1(\Omega_x,\bigcup_{p\in X}\Omega_p)$ $\Rightarrow$ I $\uparrow G_1^*(\tau_x,CD)$.\medskip 

Let $\rho$ be a winning strategy for Player II in the game $G_1(\Omega_x,\bigcup_{x\in X}\Omega_x)$. First of all, we note that there is an open set $V_0\in\tau_x$ such that each point $y\in V_0$ is the first movement of Player II with respect to $\rho$, i.e., there is an $A\in\Omega_x$ such that $y=\rho(A)$. If this is not the case, then we may obtain a subset $C\in\Omega_x$ such that $\rho(C)\not\in C$, which is absurd. As in the previous paragraph, Player I may use this neighborhood $V_0$ as her first movement, to which Player II responds with a point $x_0=\sigma(A_0)$ for some $A_0\in \Omega_x$. Proceeding like this, it is easy to see that Player I obtains a winning strategy in the game $G_1^*(\tau_x,CD)$, as desired.
\end{proof}

Since $\Omega_x\subseteq \bigcup_{p\in X}\Omega_p$, both the implications II $\uparrow G_1(\Omega_x,\Omega_x)\Rightarrow$ {I $\uparrow G_1^*(\tau_x,CD)$} and II $\uparrow G_1^*(\tau_x,CD)\Rightarrow$ I $\uparrow G_1(\Omega_x,\Omega_x)$ hold, and none of these are reversible, as the space $X=\omega_1+1$ shows: as we already showed in Example~\ref{example.omega1.1}, $X$ satisfies I $\uparrow G_1^*(\tau_x,CD)$, and Player I can win every play of the game $G_1(\Omega_{\omega_1},\Omega_{\omega_1})$ by choosing the subset $[0,\omega_1)$ at every inning. Once again, the G$_\delta$-condition gives one of the converses.





\begin{prop}\label{na:oo}
Let $X$ be a regular space and let $x\in X$ be a $G_{\delta}$-point. If I $\uparrow G_1^*(\tau_x,CD)$ then II $\uparrow G_1(\Omega_{x},\Omega_{x})$.\end{prop}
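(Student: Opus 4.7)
The plan is to adapt the construction from Proposition \ref{na:acum}, combining it with the duality argument of Proposition \ref{dual}. Given a winning strategy $\sigma$ for Player I in $G_1^*(\tau_x,CD)$ and a family $\{U_n\}_{n\in\omega}$ of open sets with $\bigcap_{n\in\omega}U_n=\{x\}$, I would construct a winning strategy $\mu$ for Player II in $G_1(\Omega_x,\Omega_x)$ by simulating, in the background, a play of $G_1^*(\tau_x,CD)$ in which Player I follows $\sigma$.

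The construction proceeds inductively. When Player I plays $B_0\in\Omega_x$, set $V_0=\sigma(\emptyset)\cap U_0$; since $x\in\overline{B_0}$ and $V_0\in\tau_x$, pick any $y_0\in V_0\cap B_0$ (note $y_0\neq x$ because $x\notin B_0$), and let $\mu(B_0)=y_0$. By regularity, choose $A_0\in\tau_x$ with $x\in A_0\subseteq\overline{A_0}\subseteq V_0$ and $y_0\notin\overline{A_0}$. At the $(n{+}1)$-st inning, given $B_{n+1}\in\Omega_x$, set $V_{n+1}=\sigma(y_0,\dots,y_n)\cap A_n\cap U_{n+1}$, pick $y_{n+1}\in V_{n+1}\cap B_{n+1}$, and separate $y_{n+1}$ from $x$ inside $V_{n+1}$ using regularity to obtain $A_{n+1}$. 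Define $\mu(B_0,\dots,B_{n+1})=y_{n+1}$.

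Two facts make this work. First, the nested inclusions $\overline{V_{n+1}}\subseteq\overline{A_n}\subseteq V_n$ together with the choice $y_k\notin\overline{A_k}$ ensure that $(y_n)_{n\in\omega}$ is a sequence of pairwise distinct points, so the associated moves $(V_0,y_0,V_1,y_1,\dots)$ form a legal play of $G_1^*(\tau_x,CD)$ in which Player I uses $\sigma$. Since $\sigma$ is winning, the set $\{y_n:n\in\omega\}$ has an accumulation point in $X$. Second, exactly as in the end of the proof of Proposition \ref{na:acum}, the inclusions $\bigcap_n\overline{V_n}\subseteq\bigcap_n U_n=\{x\}$ force this accumulation point to be $x$: any other $y\in X$ would fail to lie in some $\overline{V_k}$, so $X\setminus\overline{V_k}$ would be a neighborhood of $y$ meeting only finitely many $y_n$.

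Thus $x\in\overline{\{y_n:n\in\omega\}}$, and since $x\neq y_n$ for all $n$, we conclude $\{y_n:n\in\omega\}\in\Omega_x$, so $\mu$ is a winning strategy for Player II in $G_1(\Omega_x,\Omega_x)$. The main point to be careful about is verifying distinctness of the $y_n$, which is what upgrades the conclusion from Proposition \ref{dual} (accumulation somewhere) to accumulation at $x$; everything else is a direct transcription of the regularity--$G_\delta$ trick already used twice in Section 2.
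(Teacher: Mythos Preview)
Your argument is correct and follows essentially the same approach as the paper. The only difference is organizational: the paper first invokes Proposition~\ref{na:acum} to obtain a winning strategy for Player~I in $(G_1)(\tau^*,\neg L_x)$ whose image lies in $\tau_x$, and then runs the short duality argument against that strategy; you instead inline the regularity/$G_\delta$ construction directly into the duality step. One small phrasing point: the legal $\sigma$-play you are appealing to is $(\sigma(\emptyset),y_0,\sigma(y_0),y_1,\dotsc)$ rather than $(V_0,y_0,V_1,y_1,\dotsc)$, but since $y_n\in V_n\subseteq\sigma(y_0,\dotsc,y_{n-1})$ this makes no difference to the conclusion.
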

\begin{proof} As in the proof of Proposition \ref{na:acum}, let us to take a winning strategy $\sigma$ for Player I in the game $(G_1)(\tau^*,\neg L_x)$ such that $x\in\bigcap \operatorname{im}(\sigma)$. Now, Player II may use $\sigma$ to play in the game $G_1(\Omega_{x},\Omega_{x})$ as follows: if $A_0\in\Omega_{x}$ is the first move of Player I in the game $G_1(\Omega_{x},\Omega_{x})$, then Player II picks a point $x_0\in \sigma(\emptyset)\cap A_0$, what can be done since $\sigma(\emptyset)\in \tau_{x}$; at the next inning, Player I chooses $A_1\in\Omega_{x}$ and Player II answers with $x_1\in \sigma(x_0)\cap A_1$. Proceeding like this we obtain a winning strategy for Player II in the game $G_1(\Omega_{x},\Omega_{x})$, since a play $(A_0,x_0,A_1,x_1,\dotso)$ in this game, played according with the previous strategy, corresponds to the play $(\sigma(\emptyset),x_0,\sigma(x_0),x_1,\sigma(x_0,x_1),x_2,\dotso)$ in the game $(G_1)(\tau^*,\neg L_x)$ according with the winning strategy $\sigma$, from which it follows that $x\in\overline{\{x_n:n\in\omega\}}$.
%
\end{proof}

\begin{cor}
Every G$_\delta$ $q$-point of a regular space has countable strong fan tightness.
\end{cor}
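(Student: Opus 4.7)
\begin{sketch}
The plan is to compose implications already assembled in the section; no new construction is required. First I would note that being a $q$-point is literally the assertion $\neg S_1^*(\tau_x,CD)$, since Michael's neighborhood sequence $(V_n)_{n\in\omega}$ witnesses the $q$-point property exactly when no injective selection with $b_n\in V_n$ yields a closed discrete set. The standard implication $\neg S_1(\mathcal{A},\mathcal{B})\Rightarrow \text{I}\uparrow G_1(\mathcal{A},\mathcal{B})$ transfers to the starred variant without modification and therefore gives I$\,\uparrow G_1^*(\tau_x,CD)$.

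Since $X$ is regular and $x$ is a G$_\delta$-point, Proposition~\ref{na:oo} applies directly and upgrades this to II$\,\uparrow G_1(\Omega_x,\Omega_x)$. All the substantive work has already been carried out there: regularity together with the G$_\delta$-structure is precisely what forces $x$ itself, rather than some arbitrary point of $X$, to be the accumulation point of the sequence produced by Player II.

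Finally, a winning strategy $\mu$ for Player II in $G_1(\Omega_x,\Omega_x)$ yields $S_1(\Omega_x,\Omega_x)$ by the usual simulation: given any sequence $(A_n)_{n\in\omega}$ in $\Omega_x$, have Player I play $A_n$ at the $n$-th inning and let Player II respond according to $\mu$; by hypothesis the resulting selection belongs to $\Omega_x$, which is exactly countable strong fan tightness at $x$. I foresee no real obstacle beyond this bookkeeping, as the heavy lifting lies in Proposition~\ref{na:oo}.
\end{sketch}
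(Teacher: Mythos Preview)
Your argument is correct and is precisely the chain of implications the paper intends: $q$-point $\Rightarrow$ I~$\uparrow G_1^*(\tau_x,CD)$ (the $\dagger_3$ arrow), then Proposition~\ref{na:oo} gives II~$\uparrow G_1(\Omega_x,\Omega_x)$, and the trivial simulation yields $S_1(\Omega_x,\Omega_x)$. This matches the paper's approach, which leaves the corollary unproved because it is read off directly from the diagram in Figure~\ref{bigfigure}.
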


\begin{prob}
Let $X$ be a regular space and let $x\in X$ be a $G_\delta$-point. Are the games $G_1(\tau_x,CD)$ and $G_1(\Omega_x,\Omega_x)$ dual?\medskip
\end{prob}

Notice that by the previous proposition, the above problem depends on the converse of the implication II $\uparrow G_1^*(\tau_x,CD)\Rightarrow$ I $\uparrow G_1(\Omega_x,\Omega_x)$.

\section{Further questions and comments}

In the next diagram we summarize the results obtained so far. The superscript ``$^*$'' indicates that the implication holds under the hypotheses that $X$ is regular and $x$ is a G$_\delta$-point, while $\neg CD$ abbreviates the set $\bigcup_{p\in X}\Omega_p$.

\tiny
\begin{figure}[h!]
\centering
\begin{tikzcd}
	&& {x~\text{has a count. local basis}} \\
	\\
	{\neg\textsf{S}_1(\tau_x,\neg\Gamma_x)} & {\neg(\textsf{S}_1)(\tau^*,\neg L_x)} & {x~\text{is a }q\text{-point}} \\
	{x~\text{is a }W\text{-point}} & {x~\text{is a }\widetilde{W}\text{-point}} & {\textsf{I}\uparrow \textsf{G}_1^*(\tau_x,CD)} & {\textsf{II}\uparrow\textsf{G}_1(\Omega_x,\neg CD)} & {\textsf{II}\uparrow\textsf{G}_1(\Omega_x,\Omega_x)} \\
	{x~\text{is a }w\text{-point}} & {\textsf{II}\not\,\uparrow(\textsf{G}_1)(\tau^*,\neg L_x)} & {\textsf{II}\not\,\uparrow\textsf{G}_1^*(\tau_x,CD)} & {\textsf{I}\not\,\uparrow\textsf{G}_1(\Omega_x,\neg CD)} & {\textsf{I}\not\,\uparrow\textsf{G}_1(\Omega_x,\Omega_x)} \\
	&&&& {\textsf{S}_1(\Omega_x,\Omega_x)}
	\arrow["\dagger_1"', Rightarrow, from=3-1, to=4-1]
	\arrow["\dagger_2"', Rightarrow, from=3-2, to=4-2]
	\arrow["\dagger_3"', Rightarrow, from=3-3, to=4-3]
	\arrow[Rightarrow, from=3-1, to=4-1]
	\arrow[Rightarrow, from=3-2, to=4-2]
	\arrow[Rightarrow, from=3-3, to=4-3]
	\arrow[Rightarrow, from=4-2, to=5-2]
	\arrow[Rightarrow, from=4-1, to=5-1]
	\arrow["\eqref{W.thenWtilde}", Rightarrow, from=4-1, to=4-2]
	\arrow["\eqref{s1(s1)}", shorten <=2pt, Rightarrow, from=3-1, to=3-2]
	\arrow[curve={height=12pt}, Leftrightarrow, from=1-3, to=3-1]
	\arrow[Rightarrow, from=4-3, to=5-3]
	\arrow["\text{\cite{michael}}", shift left=3, shorten <=6pt, Rightarrow, from=1-3, to=3-3]
	\arrow["{{{}^*\text{Prop.~\ref{II.onion}}}}"', Rightarrow, from=5-3, to=5-2]
	\arrow["{{}^*\text{Prop.~\ref{na:acum}}}"', Rightarrow, from=4-3, to=4-2]
	\arrow["{{}^*\text{Prop.~\ref{s1.onion}}}"', Rightarrow, from=3-3, to=3-2]
	\arrow["{\text{Prop.~\ref{dual}}}", Rightarrow, 2tail reversed, from=5-3, to=5-4]
	\arrow["{\text{Prop.~\ref{dual}}}", Rightarrow, 2tail reversed, from=4-3, to=4-4]
	\arrow[Rightarrow, from=4-5, to=4-4]
	\arrow[Rightarrow, from=5-5, to=5-4]
	\arrow[Rightarrow, from=4-4, to=5-4]
	\arrow[Rightarrow, from=5-5, to=6-5]
	\arrow["{{}^*\text{Prop.~\ref{na:oo}}}", curve={height=-30pt}, Rightarrow, from=4-3, to=4-5]
	\arrow[Rightarrow, from=4-5, to=5-5]
	\arrow[curve={height=-18pt}, Rightarrow, from=1-3, to=4-5]
\end{tikzcd}
\caption{The current big picture.}\label{bigfigure}
\end{figure}

\normalsize

Once these local properties are established under the selection principles landscape, the typical combinatorics questions apply. For instance, considering the equivalence between the $S_1(\mathcal{A,B})$ principle with I $\not\,\uparrow G_1(\mathcal{A,B})$ when $\mathcal{A}$ and $\mathcal{B}$ are replaced by the family of all open coverings\footnote{Pawlikowski~\cite{Pawlikowski1994}.}, one can ask whether something similar happens in the present context. This is the case for $q$-points under the presence of a countable local $\pi$-basis.

\begin{prob}
Does the converse of any of the $(\dagger_i)$ arrows in Figure~\ref{bigfigure} hold?
\end{prob}


There are natural connections with $C_p$-theory and covering properties. Indeed, for a Tychonoff space $Y$, Theorem 4.4 in \cite{McCoybook} establishes that the conditions
\begin{itemize}
\item $C_p(Y)$ is first countable,
\item $C_p(Y)$ is a $q$-space, and
\item $Y$ is countable
\end{itemize}
are equivalent, thus suggesting the following.
\begin{prob}
Is there any uncountable space $Y$ such that $C_p(Y)$ is $W$-space (or has any other property in Figure~\ref{bigfigure}?
\end{prob}

Since the pseudocharacter of $C_p(Y)$ is the density of the space $Y$~\cite{guthrie_1974}, an uncountable separable space $Y$ such that $C_p(Y)$ satisfies II $\uparrow G_1(\Omega_f,\Omega_f)$ could provide positive answers to some instances of the above problem\footnote{Thus, by a result of Scheepers~\cite{Scheep2015}, $Y$ should be a space such that II $\uparrow G_1(\Omega(Y),\Omega(Y))$ holds, where $\Omega(Y)$ is the family of all $\omega$-coverings of $Y$~\cite{Gerlits1982}.}.







\section*{Acknowledgments}

We would like to thank Leandro Aurichi for bringing the subject of q-points to us, which was the initial inspiration for this work.

\bibliography{ref_1} \nocite{*}
\bibliographystyle{acm}
\end{document}